\theoremstyle{plain}
\newtheorem{theorem}{Theorem}
\newtheorem{lemma}{Lemma}
\theoremstyle{definition}
\theoremstyle{remark}
\numberwithin{equation}{section}
\def\bb{$\lim\limits_{i\to \infty }{\frac{b_{i+1}}{b_{i}}}$}
\def\aa{$\lim\limits_{i\to \infty }{\frac{a_{i+1}}{a_{i}}}$}
\def\ba{$\lim\limits_{i\to \infty }{\frac{b_{i}}{a_{i}}}$}
\def\bab{$\lim\limits_{i\to \infty }{\frac{b_{i}}{a_{i}+b_{i}}}$}
\def\abab{$\lim\limits_{i\to \infty }{\frac{a_{i+1}+b_{i+1}}{a_{i}+b_{i}}}$}
\def\balpha{\text{\boldmath $\alpha$}}
\author{L\'aszl\'o N\'emeth}
\date{}
\title{\bf Trees on hyperbolic honeycombs\thanks{AMS: 05C05, 52C20;
          key words: hyperbolic mosaics, regular mosaics, honeycombs, graph, tree;
          lnemeth@emk.nyme.hu}}
\begin{document}
\maketitle

\begin{abstract}
In the hyperbolic plane there are infinite regular lattices. From a fix vertex of a lattice tree graphs can be constructed recursively to the next layers with edges of the lattice. In this article we examine the properties of the growing of trees and the probabilities of length of trees considering the vertices on level $i$.

\end{abstract}

\section{Introduction}\label{sec:introduction}

In a regular mosaic we can define belts of cells around a fix vertex of the mosaic. Belt $0$ is the fix vertex. The first belt consists of the cells of the mosaic having common (finite) points with the fix vertex. If belt $i$ is known, let belt $(i+1)$ consist of the cells that have a common (finite) point (not necessarily a common vertex) with the belt $i$, but have not with the belt $(i-1)$. Figure \ref{abra:tree45} shows the  first three belts in mosaic $\{4,5\}$. Earlier studies have dealt with the problem of the growing of belts.  Let $v_i$ be the number of the cells in the belt $i$. The crystal-growing ratio, $\lim_{i\to \infty }{({v_{i+1}}/{v_{i}})}$, is known for all 2-dimensional and some 3- and 4-dimensional regular mosaics in  hyperbolic spaces (\cite{kar, hor, nem1, nem2, ver2, zei}). 

In this article we consider a regular planar mosaic with Schl\"af{}li's symbol $\{p,q\}$ (\cite{Cox}) as a lattice and construct tree graphs along the edges . The number of the trees grows from belt to belt and we examine the intensity of this growth. If $(p-2)(q-2)=4$, then the lattice is Euclidean, while for $(p-2)(q-2)>4$ the lattice is hyperbolic. There are only three regular lattices ($\{3,6\}$, $\{4,4\}$, $\{6,3\}$) on the Euclidean plane but there are infinite ones on the hyperbolic plane. (Some papers have studied percolation problems on hyperbolic lattices, where mosaics are considered to be lattices \cite{Baek, Hang}.)

 \section{Trees}\label{sec:trees}

Let us fix a $B_0$ vertex of the lattice as a main root (label it layer $0$ or level $0$). Let the outer boundary of belt $i$ be layer $i$ or level $i$. Now we connect the vertices from layer $0$ to layer 1 along the edges of the lattice. We build the trees from level $(i-1)$ to level $i$ using the maximum number of edges between level $(i-1)$ and  level $i$. (All vertices on level $i$ are connected to only one vertex of the previous level. We do not let leaves on level $(i-1)$.) We never connect edges on the same layer. The rest vertices on layer $i$ will be  also roots of new trees. In this recursive way, we obtain infinitely long trees. Let $B$ denote the roots and $A$ the other vertices. In Figure~\ref{abra:tree45} and \ref{abra:tree37} the thick edges show the trees from level 0 to level 4. (The dual problem is the case when we get trees by connecting the centres of the cells of the mosaic (Figure~\ref{abra:tree45dual}).) 

 \begin{figure}[!htb]
 \centering   \includegraphics{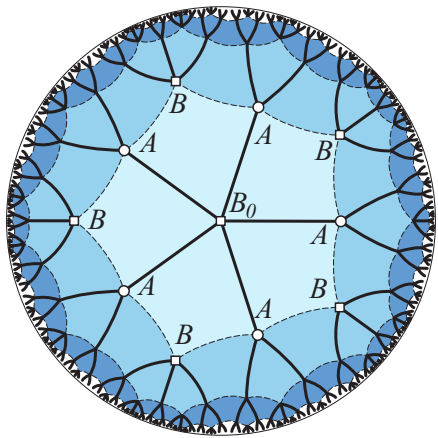} \caption{\emph{Trees of the mosaic $\{p,q\}=\{4,5\}$.}}\label{abra:tree45}
 \end{figure}
 
 \begin{figure}[!htb]
 \centering   \includegraphics{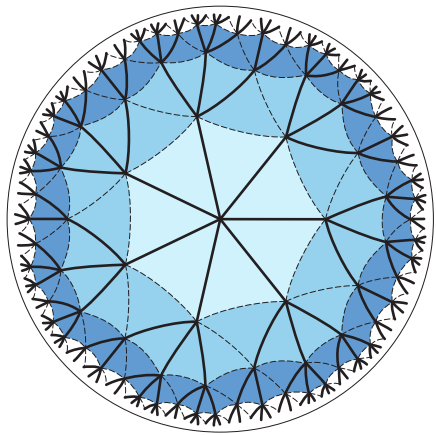} \caption{\emph{Trees of the mosaic $\{p,q\}=\{3,7\}$.}}\label{abra:tree37}
 \end{figure}

In case of $q=3$ there is not any tree with this definition, because only one edge is not enough to connect the layers. If $p=3$ the algorithm does not give roots except the main one (Figure~\ref{abra:tree37}). Let $a_i$ and $b_i$ be the numbers of the vertices $A$ and $B$ on level $i$, respectively.

 \begin{figure}[!h]
 \centering   \includegraphics{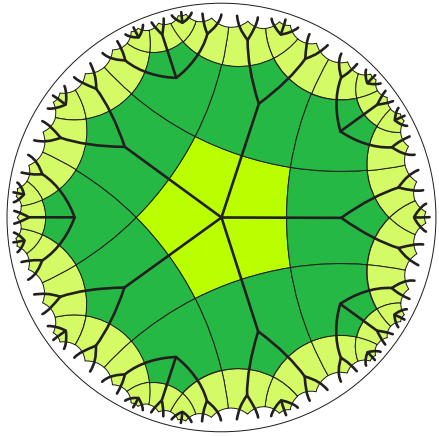} \caption{\emph{Trees of the mosaic $\{5,4\}$, dual of mosaic  $\{4,5\}$.}} \label{abra:tree45dual}
 \end{figure}

In the following we shall give some properties of the sequences $a_i$ and $b_i$ for all hyperbolic planar lattices $\{p,q\}$ except the case $p=3$ or $q=3$.

In case of all $\{p,q\}$ lattices $a_0=0$, $b_0=1$ and $a_1=q$, $b_1=q(p-3)$.

\begin{lemma}\label{lemma:ai} If $p>3$, $q>3$ and  $i\geq 1$, then $a_{i+1}=(q-3)a_{i}+(q-2)b_{i}$ and \\ $b_{i+1}=((q-3)(p-3)-1)a_{i}+((q-2)(p-3)-1)b_{i}$.
\end{lemma}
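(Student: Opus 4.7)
The plan is to analyze the mosaic locally at each vertex on level $i$ and count by cell-edge incidence. At any vertex $v$ on level $i\ge 1$, the $q$ cells meeting $v$ split into two consecutive arcs in the cyclic order at $v$: those in belt $i$ (inner) and those in belt $i+1$ (outer), separated by exactly two \emph{side-edges} lying along level $i$. Writing $c_i(v)$ for the number of belt-$i$ cells at $v$, one reads off that $v$ has $c_i(v)-1$ down-edges (to level $i-1$), $2$ side-edges, and $q-c_i(v)-1$ up-edges (to level $i+1$). The key structural fact, which rests on hyperbolicity with $p,q>3$, is that every level-$i$ vertex satisfies $c_i(v)\in\{1,2\}$; this produces the dichotomy used throughout: a $B$-vertex has $c_i(v)=1$ with $0$ down-edges and $q-2$ up-edges, while an $A$-vertex has $c_i(v)=2$ with exactly one down-edge (the tree edge) and $q-3$ up-edges.

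For the recursion for $a_{i+1}$, I count edges between levels $i$ and $i+1$: the total equals $(q-3)a_i+(q-2)b_i$. Each such edge is the unique down-edge of some $A$-vertex on level $i+1$ and, conversely, each $A$-vertex on level $i+1$ has exactly one down-edge, so this assignment is a bijection. Hence $a_{i+1}=(q-3)a_i+(q-2)b_i$.

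For the recursion for $b_{i+1}$, I classify the cells of belt $i+1$. Every such cell has all its vertices on levels $i$ and $i+1$: a vertex on level $\le i-1$ would force the cell to share a point with belt $i-1$. Call a belt-$(i+1)$ cell \emph{flat} if it shares an edge with belt $i$ and \emph{corner} if it meets belt $i$ only at a single vertex. There are $a_i+b_i$ side-edges on level $i$, each the base of exactly one flat cell; at each level-$i$ vertex $v$ the number of corner cells at $v$ is $(q-c_i(v))-2$, giving $(q-4)a_i+(q-3)b_i$ corner cells in total. Each $B$-vertex on level $i+1$ lies in a unique belt-$(i+1)$ cell. In a flat $p$-gon $u_1,w_1,\dots,w_{p-2},u_2$ the end-vertices $w_1,w_{p-2}$ are joined to $u_1,u_2$ by down-edges and are hence $A$-type, while the remaining $p-4$ interior vertices are $B$-type; similarly a corner $p$-gon contributes two $A$-vertices adjacent to the corner and $p-3$ further $B$-vertices. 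Summing gives
\[
b_{i+1}=(p-4)(a_i+b_i)+(p-3)\bigl((q-4)a_i+(q-3)b_i\bigr),
\]
and direct expansion yields the coefficients $(q-3)(p-3)-1$ of $a_i$ and $(q-2)(p-3)-1$ of $b_i$, as stated.

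The main obstacle is the structural claim $c_i(v)\le 2$ at every level-$i$ vertex. I would prove it by induction on $i$ together with the cell classification: by induction every belt-$i$ cell is either flat (with two adjacent level-$(i-1)$ vertices forming a bottom edge) or corner (with one level-$(i-1)$ vertex). If some $v$ had $c_i(v)\ge 3$, then the middle of three consecutive belt-$i$ cells at $v$ would be a $p$-gon with $v$ on level $i$ and two distinct level-$(i-1)$ neighbours of $v$ that are not adjacent to each other in the $p$-gon (since $p>3$), matching neither the flat nor the corner pattern and giving a contradiction. The base case $i=1$ is immediate from the star of $q$ $p$-gons meeting at $B_0$, where direct neighbours lie in exactly two of these cells and far vertices in exactly one.
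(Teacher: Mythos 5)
Your proof is correct and follows essentially the same route as the paper: the $a_{i+1}$ recursion comes from counting the $q-3$ (resp.\ $q-2$) up-edges at each $A$- (resp.\ $B$-) vertex, and the $b_{i+1}$ recursion from counting the new roots supplied by the belt-$(i+1)$ cells around level $i$. The only difference is bookkeeping: you sum over flat and corner cells of belt $i+1$ directly, whereas the paper attributes each flat cell's $p-4$ roots half to each of its two level-$i$ vertices — the same double count — and your explicit structural facts (e.g.\ $c_i(v)\in\{1,2\}$, one down-edge per $A$-vertex) are exactly what the paper reads off its figure without proof.
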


\begin{proof}
The degrees of all vertices are $q$. On level $i$ a vertex has an edge from level $(i-1)$ and two on level $i$. So, this vertex is connected to the next level with $q-3$ edges to $q-3$ different $A$ (in Figure \ref{abra:tree_i} we can see a part of level $i$ and $(i+1)$, where a circle denotes a vertex $A$, a square a vertex $B$ and a triangle a vertex whose type is unknown).  Similarly, a root has two edges on level $i$, so it is connected to $q-2$ new vertices $A$ on the next level.

\begin{figure}[!htb]
\centering   \includegraphics{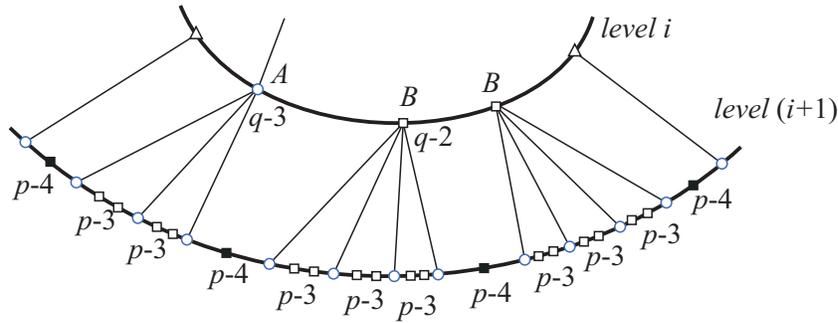} \caption{\emph{Vertices and edges from level $i$ to level $(i+1)$.}}\label{abra:tree_i}
\end{figure}

All vertices $A$ are rounded by $q$ pieces of $p$-gons. Two of them are in the belt $i$, the others are in the belt $(i+1)$. Among them there are $q-4$ pieces of $p$-gons that have $p-3$ vertices which are not connected to the tree of $A$ but they are on level $(i+1)$. So they are roots in the next level. Two further $p$-gons can have vertices as roots, but they are connected not only to $A$ but also to other vertices on level $i$ (filled squares in Figure \ref{abra:tree_i}). Due to the multiplicity  we calculate only half of them to the vertex $A$. Then the number of roots on level $(i+1)$ calculated from a vertex $A$ is $(q-4)(p-3)+2(p-4)/2=$ $(q-3)(p-3)-1$.

We can determine the number of new roots in a similar way in case of a vertex $B$. Now there are $q-3$ pieces of $p$-gons and all have $p-3$ vertices which are new roots. Similarly to case $A$, there are two polygons around $B$ having an other vertex on level $i$ (filled squares in Figure \ref{abra:tree_i}). Due to the multiplicity we have to divide their number by two to obtain the correct number of new roots. So the new roots are $(q-3)(p-3)+2(p-4)/2=$ $(q-2)(p-3)-1$ altogether. \end{proof}

From Lemma \ref{lemma:ai} we derive a recursive equation system $(p>3,\, q>3,\, i\geq1)$
\begin{eqnarray}
a_{i+1}&=&(q-3)a_{i}+(q-2)b_{i}\label{eq:a}\\
b_{i+1}&=&\big((q-3)(p-3)-1\big)a_{i}+\big((q-2)(p-3)-1\big)b_{i},
\end{eqnarray}
\noindent that can also be written in a matrix form
\begin{eqnarray}\mathbf{w}_{i+1}=\mathbf{M}\mathbf{w}_i,\end{eqnarray}
where $\mathbf{w}_i=[a_i\ \ b_i]^T$, 
$\mathbf{M}=
\bigg( \begin{matrix} 
 q-3& q-2\\
 (q-3)(p-3)-1 &(q-2)(p-3)-1\\
\end{matrix}\bigg)$.

\noindent All $\left\{ r_i\right\}_{i=1}^\infty$ recursive sequences are defined by
\begin{eqnarray}
r_{i}&=&{\balpha} ^T\mathbf{w}_i, \label{eq:ra}
\end{eqnarray}
\noindent and can be determined explicitly as (\cite{nem1, nem2})
\begin{eqnarray}
r_i=g_{r1}z_1^i+g_{r2}z_2^i, \label{eq:gyokokkel}
\end{eqnarray}
\noindent where $\mathbf{\balpha}$ is a real vector and if $c=(p-2)(q-2)-2>2$ then $z_1=\frac{c+\sqrt{c^2-4}}{2}$, $z_2=\frac{c-\sqrt{c^2-4}}{2}$. The quantities $z_1$, $z_2$ are the eigenvalues of matrix $\mathbf{M}$ and  $z_1=\left|z_1\right|>\left|z_2\right|\neq 0 $ with  
\begin{eqnarray*}
g_{r1}=\frac{r_2-z_2r_1}{z_1(z_1-z_2)}\neq 0,\ \   g_{r2}=\frac{z_1r_1-r_2}{z_2(z_1-z_2)}. \label{eq:gi}
\end{eqnarray*}

\begin{theorem}
The growing ratios of the vertices, of the roots and in addition,  of all the vertices are equal to $z_1$, \aa $=$ \bb $=$ \abab$=z_1$ $(i\geq 1)$.
\end{theorem}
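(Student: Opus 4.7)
The plan is to reduce the theorem to a single application of the closed form (\ref{eq:gyokokkel}), carried out three times. First observe that each of $\{a_i\}$, $\{b_i\}$, and $\{a_i+b_i\}$ is an instance of the template (\ref{eq:ra}), obtained from the weight vectors $\balpha=(1,0)^T$, $\balpha=(0,1)^T$, and $\balpha=(1,1)^T$ respectively. Hence each admits an expansion
\[
r_i=g_{r1}z_1^i+g_{r2}z_2^i,
\]
with its own coefficients determined by $r_1$ and $r_2$ via the formulas written after (\ref{eq:gyokokkel}).

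Having this, the ratio calculation is routine. I would factor out the dominant term and write
\begin{equation*}
\frac{r_{i+1}}{r_i}=\frac{g_{r1}z_1+g_{r2}z_2\bigl(z_2/z_1\bigr)^{i}}{g_{r1}+g_{r2}\bigl(z_2/z_1\bigr)^{i}}.
\end{equation*}
Since $z_1=|z_1|>|z_2|\neq 0$ by hypothesis, the ratio $|z_2/z_1|<1$ and so $(z_2/z_1)^i\to 0$. Provided $g_{r1}\neq 0$, the numerator and denominator converge respectively to $g_{r1}z_1$ and $g_{r1}$, so $r_{i+1}/r_i\to z_1$. Applying this to the three sequences in turn simultaneously yields all three claimed limits.

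The only nontrivial point, and what I expect to be the main obstacle, is confirming $g_{r1}\neq 0$ for these three specific choices of $\balpha$. This is already asserted generically in the setup, but it ought to be verified in the concrete cases, since the formula $g_{r1}=(r_2-z_2r_1)/\bigl(z_1(z_1-z_2)\bigr)$ could in principle vanish. A short computation gives $\det\mathbf{M}=(q-3)\bigl((q-2)(p-3)-1\bigr)-(q-2)\bigl((q-3)(p-3)-1\bigr)=1$, whence $z_1z_2=1$ and $0<z_2<1<z_1$. Since in every hyperbolic case $(p-2)(q-2)>4$ the entries of $\mathbf{M}$ are positive and its action strictly increases positive vectors, one checks directly from (\ref{eq:a}) and the initial values $a_1=q$, $b_1=q(p-3)$ that $r_2>r_1>z_2r_1$ for each of the three sequences; hence $g_{r1}>0$ in each case, which disposes of the obstacle and completes the argument.
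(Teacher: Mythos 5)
Your argument is correct, and it is more self-contained than the paper's. The paper's proof of this theorem is essentially a reduction-plus-citation: it identifies each of $a_i$, $b_i$, $a_i+b_i$ as a sequence of the form \eqref{eq:ra} (with weight vectors written there as $[0\ 1]^T\mathbf{M}^{-1}$, $[1\ 0]^T\mathbf{M}^{-1}$, $[1\ 1]^T\mathbf{M}^{-1}$) and then invokes \cite{nem1} for the fact that the growth ratio of such a sequence is the dominant eigenvalue $z_1$ of $\mathbf{M}$. You instead take the explicit solution \eqref{eq:gyokokkel} at face value, compute
\begin{equation*}
\frac{r_{i+1}}{r_i}=\frac{g_{r1}z_1+g_{r2}z_2\left(\frac{z_2}{z_1}\right)^{i}}{g_{r1}+g_{r2}\left(\frac{z_2}{z_1}\right)^{i}}\longrightarrow z_1,
\end{equation*}
and, crucially, verify the hypothesis $g_{r1}\neq 0$ that the paper merely asserts: your observation that $\det\mathbf{M}=1$, hence $z_1z_2=1$ and $0<z_2<1<z_1$, combined with the positivity of the entries of $\mathbf{M}$ in every hyperbolic case $p>3$, $q>3$ and the initial values $a_1=q>0$, $b_1=q(p-3)>0$, gives $r_2>r_1>z_2r_1$ and so $g_{r1}>0$ for all three sequences. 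This is the same underlying idea (dominance of $z_1$ in the spectral decomposition), but carried out in full rather than outsourced to \cite{nem1}; what your route buys is a complete, checkable proof within the paper, including the nonvanishing of the leading coefficient, at the cost of a short extra computation. One small remark: your choice of weight vectors $(1,0)^T$, $(0,1)^T$, $(1,1)^T$ is the natural one for $\mathbf{w}_i=[a_i\ \ b_i]^T$ and is cleaner than the $\mathbf{M}^{-1}$-twisted vectors displayed in the paper's proof.
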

\begin{proof}
Let sequences $r_i$ be equal to $a_i$, $b_i$ or $a_i+b_i$. Then $\balpha^T=[0 \ 1]^T\mathbf{M}^{-1}$, $\balpha^T=[1 \ 0]^T\mathbf{M}^{-1}$ or $\balpha^T=[1 \ 1]^T\mathbf{M}^{-1}$, respectively, and from \cite{nem1} we obtain that the limits are equal to the largest eigenvalue $z_1$ of matrix  $\mathbf{M}$.   \end{proof}

\medskip

With the help of equation \eqref{eq:gyokokkel}, we can write $a_i=g_{a1}z_1^i+g_{a2}z_2^i$, $b_i=g_{b1}z_1^i+g_{b2}z_2^i$, $a_i+b_i=g_{ab1}z_1^i+g_{ab2}z_2^i$. Let $L=\frac{g_{b1}}{g_{a1}}$ and $K=\frac{L}{1+L}$.

\begin{theorem}
$\lim\limits_{i\to \infty }{\frac{b_{i}}{\sum_{j=0}^{i}b_{j}}}=\frac{z_1-1}{z_1}$  $(i\geq 1)$. \ba$=L$,  \bab$=K$  $(i\geq 1)$.
\end{theorem}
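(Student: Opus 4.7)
The plan is to leverage the Binet-type representations $a_i=g_{a1}z_1^i+g_{a2}z_2^i$ and $b_i=g_{b1}z_1^i+g_{b2}z_2^i$ stated just before the theorem, together with the strict dominance $|z_1|>|z_2|$ and the positivity $z_1>1$. The latter holds because in the hyperbolic case $c=(p-2)(q-2)-2>2$, so $z_1=(c+\sqrt{c^2-4})/2>1$; moreover $\det\mathbf{M}=1$ gives $z_1z_2=1$, hence $0<z_2<1$.

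For the ratio $b_i/a_i$ I would factor $z_1^i$ out of numerator and denominator:
\[
\frac{b_i}{a_i}=\frac{g_{b1}+g_{b2}(z_2/z_1)^i}{g_{a1}+g_{a2}(z_2/z_1)^i}.
\]
Since $|z_2/z_1|<1$, the powers $(z_2/z_1)^i$ tend to $0$, and since $g_{a1}\ne 0$ (as asserted in the display defining $g_{r1}$, applied to $r_i=a_i$), the quotient tends to $g_{b1}/g_{a1}=L$. The identity $\lim b_i/(a_i+b_i)=K$ is then an immediate corollary: the continuous map $x\mapsto x/(1+x)$ sends $b_i/a_i\to L$ to $b_i/(a_i+b_i)\to L/(1+L)=K$.

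For the first (partial-sum) identity I would evaluate $\sum_{j=0}^{i}b_j$ geometrically, term by term:
\[
\sum_{j=0}^{i}b_j=g_{b1}\frac{z_1^{i+1}-1}{z_1-1}+g_{b2}\frac{z_2^{i+1}-1}{z_2-1},
\]
which is legitimate because $z_1>1$ and $z_2\ne 1$. Dividing both this sum and $b_i=g_{b1}z_1^i+g_{b2}z_2^i$ by $z_1^{i+1}$ and letting $i\to\infty$, the sub-leading contributions vanish (using $|z_2/z_1|<1$ and $z_1^{-i}\to 0$, together with $g_{b1}\ne 0$), leaving
\[
\lim_{i\to\infty}\frac{b_i}{\sum_{j=0}^i b_j}=\frac{g_{b1}/z_1}{g_{b1}/(z_1-1)}=\frac{z_1-1}{z_1}.
\]

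Once the closed forms from \eqref{eq:gyokokkel} are in hand, the argument is really just bookkeeping; the only points that need explicit attention are the side-conditions $g_{a1}\ne 0$, $g_{b1}\ne 0$ and $z_1>1$, all of which follow directly from the standing hypotheses $p>3$, $q>3$ and $(p-2)(q-2)>4$. I do not anticipate any substantive obstacle.
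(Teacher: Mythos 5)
Your treatment of $\lim b_i/a_i=L$ and $\lim b_i/(a_i+b_i)=K$ is exactly the paper's argument (factor out $z_1^i$ in the Binet forms, use $|z_2/z_1|<1$ and $g_{a1}\neq0$, then apply $x\mapsto x/(1+x)$). Where you genuinely differ is the first limit: the paper does not prove it at all but simply cites \cite{nem1}, whereas you derive it directly by summing the closed forms geometrically and comparing leading terms; your route has the advantage of being self-contained and of making visible exactly which facts are used ($z_1>1$, $z_1z_2=\det\mathbf{M}=1$ so $0<z_2<1$, and $g_{b1}\neq0$). One small caveat: the representation $b_j=g_{b1}z_1^j+g_{b2}z_2^j$ is only guaranteed for $j\geq1$, since $g_{b1},g_{b2}$ are fitted to $b_1,b_2$ and the two-term recursion does not extend back to $j=0$ (for $\{4,5\}$ one has $g_{b1}+g_{b2}=5\neq1=b_0$), so your closed form for $\sum_{j=0}^i b_j$ is off by a fixed constant at the $j=0$ term; this is harmless for the limit, because after dividing by $z_1^{i+1}\to\infty$ any bounded discrepancy vanishes, but the sum should formally be taken from $j=1$ with $b_0$ added separately. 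With that one-line adjustment the argument is complete and, for the partial-sum claim, strictly more informative than the paper's citation.
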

\begin{proof}

The first limit for $r_i=b_i$ comes from \cite{nem1}.

As $\lim\limits_{i \to \infty } \left({\frac{z_2}{z_1}}\right)^i =0$, then
\begin{eqnarray}
 \lim\limits_{i \to \infty }{\frac{b_{i}}{a_i}}=
 \lim\limits_{i \to\infty}{\frac{g_{b1}z_1^{i}+g_{b2}z_2^{i}}
         {g_{a1}z_1^i+g_{a2}z_2^i}}
 =\lim\limits_{ i\to\infty}
 {\frac{g_{b1}+g_{b2}\left(\frac{z_2}{z_1}\right)^i}
    {g_{a1}+g_{a2}\left(\frac{z_2}{z_1}\right)^i}}= \frac{g_{b1}}{g_{a1}}=L.
\end{eqnarray}

\begin{eqnarray}
 \lim\limits_{i \to \infty }{\frac{b_{i}}{a_i+b_i}}=   
   \lim\limits_{i \to \infty }{\frac{\frac{b_i}{a_i}}{1+\frac{b_i}{a_i}}}= K.\ 
\end{eqnarray}
\end{proof}

\section{Probability}

In what follows let us suppose that $i$ is large enough. We consider a vertex $V$ from level $i$. Let $p_{i,j}$ $(0\leq j\leq i)$ be  the probability that the root of the vertex $V$ on level $i$ is on level $j$ and let $p_{i,-j}$  be the probability that the root of the vertex $V$ on level $i$ is on level $j$ or on a level below. Let  $M=\frac{h L}{1+h L}$, where $h=\frac{q-2}{q-3}$ and  $q>3$.

\begin{theorem} \label{theorem:probability}
If  $0<j<i$, then $p_{i,-j}=(1-K)(1-M)^{i-j-1}$ and $p_{i,j}=(1-K)M(1-M)^{i-j-1}$. Moreover $p_{i,i}=K$ and $p_{i,0}=p_{i,-0}=(1-K)(1-M)^{i-1}$.
\end{theorem}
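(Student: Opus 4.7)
The plan is to reinterpret $K$ and $M$ as asymptotic conditional probabilities and iterate them along the unique path from $V$ up to its root. The first observation is immediate from Theorem~2: since $b_i/(a_i+b_i)\to K$, a uniformly chosen vertex $V$ on level $i$ is a root (type $B$) with asymptotic probability $K$, giving $p_{i,i}=K$; otherwise $V$ is of type $A$ with probability $1-K$.

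Next I would identify $M$ as the conditional probability that a type-$A$ vertex on level $i$ has a type-$B$ parent. The recurrence $a_i=(q-3)a_{i-1}+(q-2)b_{i-1}$ decomposes the $a_i$ type-$A$ vertices on level $i$ into $(q-3)a_{i-1}$ inherited from $A$-parents and $(q-2)b_{i-1}$ inherited from $B$-parents, so the conditional fraction of $A$-vertices whose parent is a root is
$$\frac{(q-2)b_{i-1}}{(q-3)a_{i-1}+(q-2)b_{i-1}}\longrightarrow \frac{hL}{1+hL}=M$$
as $i\to\infty$, using $b_{i-1}/a_{i-1}\to L$ and $h=(q-2)/(q-3)$. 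Because every type-$A$ vertex on any interior level has exactly $(q-3)$ type-$A$ children, the parent conditional on being type $A$ is uniformly distributed over the $A$-vertices of level $i-1$, which lets the same ratio argument recur one level higher. Hence, asymptotically, the sequence of ancestor types forms a Markov chain absorbed at state $B$, with transition probability $M$ from $A$ to $B$ at every step.

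Given this chain, the rest is bookkeeping. The event ``root of $V$ lies on level $j$'' with $0<j<i$ requires that $V$ and the $i-j-1$ intermediate ancestors on levels $i-1,\ldots,j+1$ all be of type $A$ and that the ancestor on level $j$ be of type $B$, yielding $p_{i,j}=(1-K)(1-M)^{i-j-1}M$. For $j=0$, the level-$0$ ancestor is the distinguished vertex $B_0$, automatically of type $B$, so the final factor $M$ is absent, giving $p_{i,0}=p_{i,-0}=(1-K)(1-M)^{i-1}$. Finally $p_{i,-j}$ requires only that $V$ and its ancestors down to level $j+1$ be of type $A$, producing $p_{i,-j}=(1-K)(1-M)^{i-j-1}$; one may cross-check by summing the geometric series $\sum_{k=0}^{j}p_{i,k}$.

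The main obstacle will be justifying the Markov reduction: one must verify that conditioning on ``$V$ is type $A$ and its parent is type $A$'' truly leaves the parent uniformly distributed over the $A$-vertices of level $i-1$, so that the same asymptotic calculation applies at the next level. This uses the uniform local structure present at every level $\geq 1$, and one must take care that the boundary vertex $B_0$ on level $0$ does not interfere with the iteration, which is precisely why $p_{i,0}$ carries a slightly different form from the generic $p_{i,j}$ with $0<j<i$.
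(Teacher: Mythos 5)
Your proposal is correct and follows essentially the same route as the paper: both read off $p_{i,i}=K$ from $b_i/(a_i+b_i)\to K$, extract $M=\frac{hL}{1+hL}$ from the decomposition $a_i=(q-3)a_{i-1}+(q-2)b_{i-1}$ together with $b_{i-1}/a_{i-1}\to L$, and iterate the same (asymptotic, approximate) ratio level by level down to the root. The only cosmetic difference is bookkeeping: you phrase the iteration as an absorbed Markov chain on ancestor types and compute $p_{i,j}$ directly, whereas the paper first derives the cumulative probabilities $p_{i,-j}$ and obtains $p_{i,j}$ by differencing.
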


\begin{proof}
The ratio of the roots and all vertices on level $i$ is $\frac{b_{i}}{a_i+b_i}$. If $i$ is large enough, then $\frac{b_{i}}{a_i+b_i} \approx K$ (in case of mosaic $\{4,5\}$ if $i=7$ then the difference between the two values is less than $10^{-6}$). Similarly, $\frac{b_{i-1}}{a_{i-1}+b_{i-1}} \approx K$. Let $V$ be a vertex on level $i$. Thus the probability that the vertex $V$ is a root is $K$ and that $V$ is not a root is $1-K$. So $p_{i,i}=K$ and $p_{i,-(i-1)}=1-K$.

On level $(i-1)$ the ratio of the numbers of roots and other vertices is $\frac{b_{i-1}}{a_{i-1}}\approx L$. Equation \eqref{eq:a} implies that $\frac{q-2}{q-3}\, L$ gives the ratio between the numbers of vertices with roots on level $(i-1)$ and with roots below. If $k_1$ is the number of the vertices which have roots on level $(i-1)$, then 
$\frac{k_1}{a_1}\approx M$. Thus the probability that the root of $V$ is neither on level $i$, nor on level $(i-1)$ is $(1-K)(1-M)$, therefore $p_{i,-(i-2)}=(1-K)(1-M)$.

Similarly, if $k_2$ is the number of the vertices on level $i$ whose roots are on level $(i-2)$ then $\frac{k_2}{a_1-k_1}\approx M$. Thus $p_{i,-(i-3)}=(1-K)(1-M)^{2}$. 

Generally, we obtain  $p_{i,-j}=(1-K)(1-M)^{i-(j+1)}$, here $0\leq j<i$. And in case of $0<j<i$, $p_{i,j}=(1-K)(1-M)^{i-(j+1)}-(1-K)(1-M)^{i-j}=(1-K)(1-M)^{i-j-1}\big(1-(1-M)\big)=(1-K)M(1-M)^{i-j-1}$.
\end{proof}
\bigskip

The probabilities in Theorem \ref{theorem:probability} are more precise the higher the $i$ is and the closer the $j$ is to $i$.

\begin{figure}[!htb]
 \centering   \includegraphics{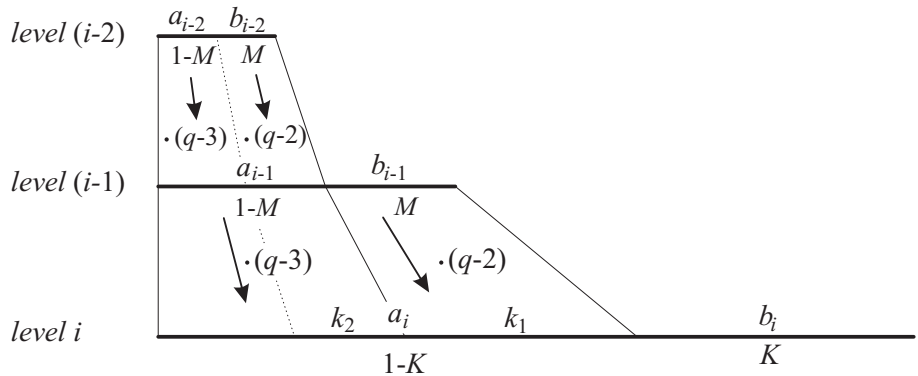} \caption{\emph{Probability of vertices on level $i$.}}\label{abra:probability}
\end{figure}

\noindent \textbf{Examples for lattice $\{4,5\}$.}

For lattice $\{4,5\}$ (Figure~\ref{abra:tree45}) we give all the results discussed above. The numbers of vertices and roots on level $i$ are  in Table \ref{tablazat:number_of_vertices} $(0\leq i\leq 10)$.

\begin{table}[!ht]
  \centering \setlength{\tabcolsep}{0.4em}
\begin{tabular}{|c|c|c|c|c|c|c|c|c|c|c|c|}
  \hline
 $i$   &  0  &  1 & 2  & 3  & 4   & 5   & 6    &  7    & 8     & 9      & 10   \\ \hline \hline
 $a_i$ &  0  &  5 & 25 & 95 & 335 & 1325& 4945 & 18455 & 68875 & 257045 & 959305  \\ \hline
 $b_i$ & 1   &  5 & 15 & 55 & 205 & 765 & 2855 & 10655 & 39765 & 148405 & 553855 \\ \hline
 $a_i+b_i$& 1 & 10 & 40 & 150 & 560 & 2090 & 7800 & 29110 & 108640 & 405450  & 1513160   \\ \hline
 \end{tabular}
\caption{\emph{Numbers of vertices and roots on level $i$}\label{tablazat:number_of_vertices}}
\end{table}

The recursion matrix of growing is  
 $\mathbf{M}=
  \begin{pmatrix}
       2 & 3 \\
     1   & 2\\
  \end{pmatrix}$ 
and the crystal-growing ratio is $z_1=2+\sqrt{3}\approx 3.732051$  (while the other eigenvalue is $z_2=2-\sqrt{3}$). In case of $i=10$ the difference between the corresponding ratios and their limits are less then $10^{-9}$ (in case of $i=100$ the difference is less then  $10^{-113}$).\\ 
The values of $g_{r1}$ for the recursive sequences $a_i$, $b_i$, $a_i+b_i$ are 
$g_{a1}= -\frac{5}{2}+\frac{5}{2}\sqrt{3} \approx 1.830127$, 
$g_{b1}=  \frac{5}{2}-\frac{5}{6}\sqrt{3} \approx 1.056624 $,
$g_{ab1}= \frac{5}{3}\sqrt{3} \approx 2.886751 $.
Other important values are $L = \frac13{\sqrt{3}}\approx 0.577350 $, 
$K = \frac{1}{2}(\sqrt{3}-1)\approx 0.366025$  and $M=-3+2\sqrt{3}\approx 0.464102$.  The difference between the value of $K$ ($p_{i,i}\approx K$) and the exact probability $p_{10,10}=\frac{b_{10}}{a_{10}+b_{10}}$ is less then $10^{-10}$.

For the probabilities from  Theorem \ref{theorem:probability} we obtain the values in Table \ref{tablazat:probability}.

\begin{table}[!ht]
  \centering
\begin{tabular}{|c|c|c|}
  \hline
   $j$   &  $i=7$  & $i=10$   \\
  \hline \hline
 10  &  -  &  0.366025   \\ \hline
 9  &  -  &  0.294228   \\ \hline
 8  &  -  &  0.157677   \\ \hline 
 7  &  0.366025  &  0.084499   \\ \hline
 6  &  0.294229  &  0.045283   \\ \hline
 5  &  0.157677  &  0.024267   \\ \hline  
 4  &  0.084499  &  0.013005   \\ \hline
 3  &  0.045283  &  0.006969   \\ \hline
 2  &  0.024267  &  0.003735   \\ \hline
 1  &  0.013005  &  0.002001   \\ \hline
 0  &  0.015016  &  0.002311   \\ \hline
\end{tabular}
\caption{\emph{Probabilities $p_{i,j}$}\label{tablazat:probability}}
\end{table}

We can give the number of the vertices on level $i$ which have the common main root given by the expression $s_i=q(q-3)^{(i-1)}$. Then the exact probability is $p_{i,0}=\frac{s_i}{a_i+b_i}$. If $i=7$ or $i=10$, then $p_{7,0}=\frac{320}{29110}\approx 0.010993$ or $p_{10,0}=\frac{2560}{1513160}\approx 0.001692$ and we obtain that $O(p_{7,0})\approx 10^{-3}$ or $O(p_{10,0})\approx 10^{-4}$ (in case of $i=100$,  $O(p_{100,0})\approx 10^{-28}$).

We can conclude that the probabilities from Theorem \ref{theorem:probability} are exact enough even in case of $i=7$. The worst result is in case of $j=0$, but as $j$ is getting closer to $i$ the result is getting more precise.

\bigskip 
\noindent \textbf{Remarks}

 We can join the main trees in their common main root creating an infinite main tree without root and we can connect the other roots to the main tree by edges, this way we obtain a spanning tree of the vertices of the lattice. In case of $p=3$ the definition in the introduction always gives a spanning tree and we can go back to the main root from all vertices on level $i$.

There is only one regular Euclidean planar lattice when $p>3$ and $q>3$. Now we examine that $\{4,4\}$ lattice (Figure~\ref{abra:tree44}). The $a_0=0$, $b_0=1$, $a_i=8i-4$ and $b_i=4$ is constant. As $z_1=z_2=1$, the growing of the vertices from level to level is slow, if $i$ is large enough, the growth is almost constant.  The probability that the root of a vertex $V$ on level $i$  is on level $j$ $(0<j<i)$ is $p_{i,-j}=\frac{8j+4}{8i}=\frac{j+\frac12}{i}$ and 
$\lim\limits_{i\rightarrow \infty}p_{i,-j}=0$.

\begin{figure}[!htb]
\centering   \includegraphics{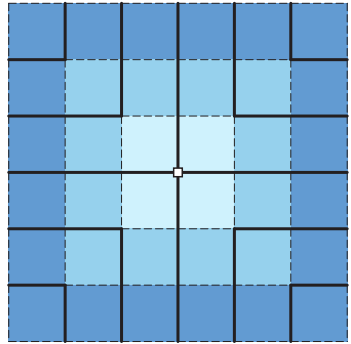} \caption{\emph{Trees of the mosaic $\{p,q\}=\{4,4\}$.}}\label{abra:tree44}
\end{figure}


\end{document}